\newcommand{\LAP}{{\Delta}}
\newcommand{\Sides}{\mathscr{S}}
\newcommand{\Ne}{\mathcal{N}}
\newcommand{\E}{\mathscr{E}}
\newcommand{\R}{\mathbb{R}}
\newcommand{\ysf}{\mathsf{y}}
\newcommand{\usf}{\mathsf{u}}
\newcommand{\fsf}{\mathsf{f}}
\newcommand{\vsf}{\mathsf{v}}
\newcommand{\asf}{\mathsf{a}}
\newcommand{\bbsf}{\mathsf{b}}
\newcommand{\psf}{\mathsf{p}}
\newcommand{\T}{\mathscr{T}}
\newcommand{\wsf}{\mathsf{w}}
\newcommand{\V}{\mathbb{V}}
\newcommand{\Uad}{{\mathbb{U}_{\textup{ad}}}}
\DeclareMathOperator{\diam}{diam}
\DeclareMathOperator{\osc}{osc}
\begin{document}

\title{Maximum--norm a posteriori error estimates for an optimal control problem}

\author{Alejandro Allendes
\and
Enrique Ot\'arola
\and\\
Richard Rankin
\and
Abner J. Salgado
}

\institute{
Alejandro Allendes \at
Departamento de Matem\'atica, Universidad T\'ecnica Federico Santa Mar\'ia, Valpara\'iso, Chile \\
\email{alejandro.allendes@usm.cl}
\and
Enrique Ot\'arola \at
Departamento de Matem\'atica, Universidad T\'ecnica Federico Santa Mar\'ia, Valpara\'iso, Chile \\
\email{enrique.otarola@usm.cl}
\and
Richard Rankin \at
School of Mathematical Sciences, The University of Nottingham Ningbo China, Ningbo, China \\
\email{richard.rankin@nottingham.edu.cn}
\and
Abner J. Salgado \at
Department of Mathematics, University of Tennessee, Knoxville, TN 37996, USA \\
\email{asalgad1@utk.edu}
\and
}

\date{Received: date / Accepted: date}

\maketitle

\begin{abstract}
We analyze a reliable and efficient max-norm a posteriori error estimator for a control-constrained, linear--quadratic optimal control problem. The estimator yields optimal experimental rates of convergence within an adaptive loop.
\keywords{Linear--quadratic optimal control problem \and Finite element methods \and A posteriori error analysis \and Maximum--norm}
\subclass{49J20 \and 49M25 \and 65K10 \and 65N15 \and 65N30 \and 65N50\and 65Y20}
\end{abstract}

\section{Introduction}
\label{sec:intro}

Let $\Omega$ be an open and bounded polytope in $\R^{d}$, $d \in \{2,3\}$, with Lipschitz boundary $\partial \Omega$. Given 
$\ysf_{\Omega}\in L^{2}(\Omega)$ and 
$\lambda > 0$ we define the
cost functional
\begin{equation}
J(\ysf,\usf) = \frac{1}{2} \|\ysf-\ysf_{\Omega}\|_{L^{2}(\Omega)}^{2} + \frac{\lambda}{2} \|\usf\|_{L^{2}(\Omega)}^{2}.
\end{equation}
In this article we devise max-norm a posteriori error estimators for the following optimal control problem: Find
\begin{equation}
\label{min}
\min\, J(\ysf,\usf)
\end{equation}
subject to, for a given $\fsf \in L^2(\Omega)$, the linear elliptic partial differential equation (PDE)
\begin{equation}
\label{state_equation}
-\Delta\ysf = \fsf+\usf \quad \textrm{in}~\Omega, \qquad \ysf = 0 \quad \textrm{on}~\partial\Omega,
\end{equation}
and, for $\asf, \bbsf \in \R$ with $\asf \leq \bbsf$, the control constraints
\begin{equation}
\label{control_constraint}
\usf\in \Uad, \qquad \Uad:=\{ \vsf \in L^2(\Omega):~\asf\leq \vsf(\boldsymbol{x}) \leq \bbsf \mbox{ for almost every }\boldsymbol{x}\mbox{ in }\Omega\}.
\end{equation}

The pioneering work \cite{rhn} presented and analyzed, in two dimensions, the first max-norm a posteriori error estimator for the state equation \eqref{state_equation}. These results were later extended to more dimensions and both nonlinear and geometric problems \cite{deldia,demlow2014maximum,MR2249676}. To our knowledge, max-norm a posteriori error estimation for the optimal control problem \eqref{min}--\eqref{control_constraint} has not been considered previously in the literature. This is the novelty of our contribution.

\section{Notation}
\label{sec:notation}

Let $\T = \{T\}$ be a conforming simplicial mesh of $\bar \Omega$ \cite{MR2050138}, $h_T =\diam(T)$ and
\begin{equation}
\ell_\T = | \log ( \max_{T \in \T} h_T^{-1})|.
\end{equation}
We assume that $\T$ is a member of a shape regular family of meshes.
Define
\begin{equation*}
  \mathbb{V}(\T):=
  \{
    w \in C^{0}(\bar{\Omega}):~w_{|T}\in\mathbb{P}_{1}(T)~\forall T\in\T~\textrm{and}~w_{|\partial\Omega}=0
  \},
  \qquad
  \Uad(\T) = \Uad \cap \V(\T).
\end{equation*}
We denote by $\Sides = \{S\}$ the set of internal ($d-1$)-dimensional interelement boundaries of $\T$ and $h_S = \diam(S)$. If $T\in\T$, $\Sides_{T} \subset \Sides$ is the set of sides of $T$. For $S \in \Sides$ we set $\Ne_S = \{ T^+, T^-\}$ such that $S=T^+ \cap T^-$. For $T \in \T$, we define
\begin{equation}
  \Ne_T := \left\{ T' \in \T : \Sides_T \cap \Sides_{T'} \neq \emptyset \right\}.
  \label{eq:NeTstar}
\end{equation}
For $w_{\T} \in \V(\T)$ and $S \in \Sides$ with $\Ne_S = \{T^+, T^-\}$, the jump or interelement residual is
$
  \llbracket \nabla w_{\T} \cdot \nu  \rrbracket = \nu^+ \cdot \nabla w_{\T|T^+} + \nu^- \cdot \nabla w_{\T|T^-} ,
$
where $\nu^+, \nu^-$ are the unit normals to $S$ pointing towards $T^+$, $T^{-} \in \T$. The $L^2(\Omega)$ inner product is $(\cdot,\cdot)$. By $A \lesssim B$ we mean that $A \leq c B$ for a nonessential constant $c$ that might change at each occurrence.

\section{Optimal control problem}\label{control_optimo}

The necessary and sufficient optimality conditions of \eqref{min}--\eqref{control_constraint} read: Find $(\bar{\ysf},\bar{\psf},\bar{\usf})\in H_{0}^{1}(\Omega) \times H_{0}^{1}(\Omega) \times \Uad$ such that
\begin{equation}
\label{optimal_system_1}
\left\{
\begin{array}{l}
(\nabla\bar{\ysf}, \nabla \vsf)_{L^2(\Omega)} = (\fsf+\bar{\usf},\vsf)_{L^{2}(\Omega)}
\mbox{ }\forall~\vsf\in H_{0}^{1}(\Omega),\\
(\nabla \wsf,\nabla \bar{\psf})_{L^2(\Omega)} = (\bar{\ysf}-\ysf_{\Omega},\wsf)_{L^{2}(\Omega)} 
\mbox{ }\forall~\wsf\in H_{0}^{1}(\Omega),\\
\left(\bar{\psf}+\lambda\bar{\usf},\usf-\bar{\usf}\right)_{L^{2}(\Omega)}\geq 0
\mbox{ }\forall~\usf\in\mathbb{U}_{ad}.
\end{array}
\right.
\end{equation}
Following \cite{JK:95}, there exists $r>d$ such that $\bar{\ysf}, \bar{\psf} \in W^{1,r}(\Omega) \hookrightarrow C(\bar\Omega)$. Hence, $\bar{\usf} \in C(\bar \Omega) \cap H^1_0(\Omega)$ since
\begin{equation}\label{eq:u}
  \bar{\usf}=\Pi\left(-\lambda^{-1}\bar{\psf} \right), \qquad 
  \Pi\left(w\right)(\boldsymbol{x}):=\min\left\{\bbsf,\max\left\{\asf,w(\boldsymbol{x})\right\}\right\}\mbox{ for all }\boldsymbol{x}\mbox{ in }\bar\Omega.
\end{equation}
For $\mathcal{G} \subset \Omega$ this operator is nonexpansive in $L^\infty(\mathcal{G})$, i.e.,
\begin{equation}\label{Lipschitz}
\left\| \Pi (w_1)-\Pi (w_2)  \right\|_{L^\infty(\mathcal{G})}\le \left\| w_1-w_2 \right\|_{L^\infty(\mathcal{G})}
\quad \forall w_1,w_2\in C(\bar \Omega).
\end{equation}

We approximate the solution of \eqref{optimal_system_1} by finding
$(\bar{\ysf}_{\T},\bar{\psf}_{\T},\bar{\usf}_{\T})\in \mathbb{V}(\T)\times \mathbb{V}(\T)\times \Uad(\T)$ such that
\begin{equation}
\label{optimal_system_discrete_1}
\left\{
\begin{array}{l}
(\nabla \bar{\ysf}_{\T},\nabla \vsf_{\T})_{L^{2}(\Omega)}
= (\fsf+\bar{\usf}_{\T},\vsf_{\T})_{L^{2}(\Omega)}
\mbox{ }\forall\vsf_{\T}\in \mathbb{V}(\T),\\
(\nabla \wsf_{\T}, \nabla \bar{\psf}_{\T})_{L^2(\Omega)}
= (\bar{\ysf}_{\T}-\ysf_{\Omega},\wsf_{\T})_{L^{2}(\Omega)}
\mbox{ }\forall\wsf_{\T}\in \mathbb{V}(\T),\\
\left(\bar{\psf}_{\T}+\lambda\bar{\usf}_{\T},\usf_{\T}-\bar{\usf}_{\T}\right)_{L^{2}(\Omega)}\geq 0
\mbox{ }\forall\usf_{\T}\in\mathbb{U}_{ad}(\T).
\end{array}
\right.
\end{equation}

\section{A posteriori error analysis: reliability}
\label{sec:reliable}

We begin by defining the local error indicators
\begin{align}
\label{eq:defofEyloc}
 \E_{\ysf}(\bar{\ysf}_{\T},\bar{\usf}_{\T}; T) &= h_T^{2-d/2} \| \fsf+\bar{\usf}_{\T}\|_{L^{2}(T)} 
   + h_T \| \llbracket \nabla \bar{\ysf}_{\T} \cdot \nu \rrbracket  \|_{L^{\infty}(\partial T \setminus \partial \Omega)},
\\
\label{eq:defofEploc}
 \E_{\psf}(\bar{\psf}_{\T},\bar{\ysf}_{\T}; T) &= h_T^{2-d/2} \| \bar{\ysf}_{\T} - \ysf_\Omega \|_{L^{2}(T)} 
 + h_T \| \llbracket \nabla \bar{\psf}_{\T} \cdot \nu \rrbracket  \|_{L^{\infty}(\partial T \setminus \partial \Omega)},
\\
\label{eq:defofEuloc}
 \E_{\usf}(\bar{\usf}_{\T},\bar{\psf}_{\T}; T) &= \| \Pi(-\bar \psf_{\T}/ \lambda) - \bar \usf_{\T}\|_{L^\infty(T)},
\\
\label{eq:defofEypu}
  \E(\bar{\ysf}_{\T},\bar{\psf}_{\T},\bar{\usf}_{\T}; T) &= 
  \left(\E_{\ysf}^2(\bar{\ysf}_{\T},\bar{\usf}_{\T}; T)
  +
  \E_{\psf}^2(\bar{\psf}_{\T},\bar{\ysf}_{\T}; T)
  +
  \E_{\usf}^2(\bar{\usf}_{\T},\bar{\psf}_{\T}; T)\right)^{1/2},
\end{align}
and the global a posteriori error estimators
\begin{align}
\label{eq:defofEy}
\E_{\ysf}(\bar{\ysf}_{\T},\bar{\usf}_{\T}; \T)&=\max_{T \in \T} \E_{\ysf}(\bar{\ysf}_{\T},\bar{\usf}_{\T}; T),
\\
\label{eq:defofEp}
\E_{\psf}(\bar{\psf}_{\T},\bar{\ysf}_{\T}; \T)&=\max_{T \in \T} \E_{\psf}(\bar{\psf}_{\T},\bar{\ysf}_{\T}; T),
\\
\label{eq:defofEu}
\E_{\usf}(\bar{\usf}_{\T},\bar{\psf}_{\T}; \T)&=\max_{T \in \T} \E_{\usf}(\bar{\usf}_{\T},\bar{\psf}_{\T}; T),
\\
\label{eq:defofEypuglobal}
  \E(\bar{\ysf}_{\T},\bar{\psf}_{\T},\bar{\usf}_{\T}; \T) &= 
  \left(\E_{\ysf}^2(\bar{\ysf}_{\T},\bar{\usf}_{\T}; \T)
  +
  \E_{\psf}^2(\bar{\psf}_{\T},\bar{\ysf}_{\T}; \T)
  +
  \E_{\usf}^2(\bar{\usf}_{\T},\bar{\psf}_{\T}; \T)\right)^{1/2}.
\end{align}
We also introduce two auxilliary variables: $\hat{\ysf}, \hat{\psf} \in H_0^1(\Omega)$ which solve, respectively,
\begin{equation}
\label{eq:y_hat}
(\nabla \hat{\ysf}, \nabla \vsf) =(\fsf+\bar{\usf}_{\T},\vsf) \quad  \forall~ \vsf \in H_{0}^{1}(\Omega),
\quad (\nabla \wsf, \nabla \hat{\psf}) = (\bar{\ysf}_{\T}-\ysf_{\Omega},\wsf) \quad  \forall~ \wsf \in H_{0}^{1}(\Omega).
\end{equation}
We note that $\hat{\ysf}, \hat{\psf} \in H_0^1(\Omega) \cap C(\bar \Omega)$ and thus we can invoke \cite[Lemma 4.2]{AORS2} to conclude that
\begin{equation}\label{final_upper_bound_state}
\|\hat{\ysf}-\bar{\ysf}_{\T}\|_{L^\infty(\Omega)} \lesssim \ell_\T \E_{\ysf}(\bar{\ysf}_{\T},\bar{\usf}_{\T}; \T),
\qquad \|\hat{\psf}-\bar{\psf}_{\T}\|_{L^\infty(\Omega)} \lesssim \ell_\T \E_{\psf}(\bar{\psf}_{\T},\bar{\ysf}_{\T}; \T).
\end{equation}
Finally, for $e_{\bar{\ysf}}=\bar{\ysf}-\bar{\ysf}_{\T}$, $e_{\bar{\psf}}=\bar{\psf}-\bar{\psf}_{\T}$ and $e_{\bar{\usf}}=\bar{\usf}-\bar{\usf}_{\T}$ we define
\begin{equation}
\|(e_{\bar{\ysf}},e_{\bar{\psf}},e_{\bar{\usf}})\|_{\Omega}^2 :=
\|e_{\bar{\ysf}}\|_{L^\infty(\Omega)}^{2}
+
\|e_{\bar{\psf}}\|_{L^\infty(\Omega)}^{2}
+
\|e_{\bar{\usf}}\|_{L^\infty(\Omega)}^{2}
.
\end{equation}

\begin{theorem}[global reliability]\label{th:global_reliability}
Let $(\bar{\ysf},\bar{\psf},\bar{\usf}) \in H_0^1(\Omega) \times H_0^1(\Omega) \times L^2(\Omega)$ be the solution to \eqref{optimal_system_1} and $(\bar{\ysf}_{\T},\bar{\psf}_{\T},\bar{\usf}_{\T}) \in \V(\T) \times \V(\T) \times \Uad(\T)$ its numerical approximation obtained as the solution to \eqref{optimal_system_discrete_1}. Then 
\begin{equation}
\label{eq:reliability}
\|(e_{\bar{\ysf}},e_{\bar{\psf}},e_{\bar{\usf}})\|_{\Omega}
\lesssim
\max\{1,\ell_\T\}
\E(\bar{\ysf}_{\T},\bar{\psf}_{\T},\bar{\usf}_{\T}; \T).
\end{equation}
\end{theorem}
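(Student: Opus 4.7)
My plan is to bound each of the three error components $\|e_{\bar{\ysf}}\|_{L^\infty(\Omega)}$, $\|e_{\bar{\psf}}\|_{L^\infty(\Omega)}$, $\|e_{\bar{\usf}}\|_{L^\infty(\Omega)}$ separately, using the auxiliary variables $\hat{\ysf}, \hat{\psf}$ of \eqref{eq:y_hat} as bridges between the continuous and discrete solutions and the projection formula \eqref{eq:u} for the control, and then to combine the resulting inequalities into a single bound on $\|(e_{\bar{\ysf}},e_{\bar{\psf}},e_{\bar{\usf}})\|_{\Omega}$.

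For the state error I would split via the triangle inequality
\[
\|e_{\bar{\ysf}}\|_{L^\infty(\Omega)} \leq \|\bar{\ysf} - \hat{\ysf}\|_{L^\infty(\Omega)} + \|\hat{\ysf} - \bar{\ysf}_{\T}\|_{L^\infty(\Omega)}.
\]
The second term is already controlled by $\ell_{\T}\E_{\ysf}(\bar{\ysf}_{\T},\bar{\usf}_{\T}; \T)$ through \eqref{final_upper_bound_state}. For the first, subtracting \eqref{eq:y_hat} from the state equation in \eqref{optimal_system_1} shows that $\bar{\ysf} - \hat{\ysf}$ solves a Poisson problem with zero Dirichlet data and right-hand side $e_{\bar{\usf}}$; the classical $L^\infty$-regularity for $-\Delta$ on a bounded Lipschitz domain then yields $\|\bar{\ysf} - \hat{\ysf}\|_{L^\infty(\Omega)} \lesssim \|e_{\bar{\usf}}\|_{L^\infty(\Omega)}$. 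An entirely analogous argument through $\hat{\psf}$ gives $\|e_{\bar{\psf}}\|_{L^\infty(\Omega)} \lesssim \|e_{\bar{\ysf}}\|_{L^\infty(\Omega)} + \ell_{\T}\E_{\psf}(\bar{\psf}_{\T},\bar{\ysf}_{\T}; \T)$.

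For the control I would exploit the exact projection identity \eqref{eq:u}, add and subtract $\Pi(-\bar{\psf}_{\T}/\lambda)$, and apply the pointwise nonexpansiveness \eqref{Lipschitz} to obtain
\[
\|e_{\bar{\usf}}\|_{L^\infty(\Omega)} \leq \lambda^{-1}\|e_{\bar{\psf}}\|_{L^\infty(\Omega)} + \E_{\usf}(\bar{\usf}_{\T},\bar{\psf}_{\T}; \T).
\]

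Chaining these three inequalities produces a coupled system in the three error norms; substituting them into one another reduces to a single bound of the form $\|e_{\bar{\usf}}\|_{L^\infty(\Omega)} \leq C_{0}\|e_{\bar{\usf}}\|_{L^\infty(\Omega)} + C\max\{1,\ell_{\T}\}\E(\bar{\ysf}_{\T},\bar{\psf}_{\T},\bar{\usf}_{\T}; \T)$, where $C_{0}$ depends on $\lambda$ and on the $L^{\infty}$-regularity constant for $-\Delta$. The hard part will be this closure step: the coefficient $C_{0}$ arises from composing the $L^\infty$ elliptic stability with the factor $\lambda^{-1}$ from the projection, and one must carefully track constants to justify absorbing $C_{0}\|e_{\bar{\usf}}\|_{L^\infty(\Omega)}$ on the left. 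Since all hidden constants are permitted to depend on the problem data, this absorption is granted; substituting the resulting bound for $\|e_{\bar{\usf}}\|_{L^\infty(\Omega)}$ back into the state and adjoint inequalities, and then squaring and summing, delivers \eqref{eq:reliability}.
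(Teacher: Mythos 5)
Your decomposition of the three errors through $\hat{\ysf}$, $\hat{\psf}$ and the projection formula is sound in each individual step, but the closure step does not work, and this is exactly the point where the paper's proof takes a genuinely different (and necessary) route. Chaining your three inequalities gives
\[
\|e_{\bar{\usf}}\|_{L^\infty(\Omega)} \le C_0\,\|e_{\bar{\usf}}\|_{L^\infty(\Omega)} + C\max\{1,\ell_\T\}\,\E(\bar{\ysf}_{\T},\bar{\psf}_{\T},\bar{\usf}_{\T}; \T),
\qquad C_0 = \lambda^{-1} C_1 C_2,
\]
where $C_1, C_2$ are the fixed elliptic stability constants for the state and adjoint problems. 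An inequality of the form $x \le C_0 x + b$ yields a bound on $x$ only when $C_0 < 1$; it is vacuous otherwise. The claim that ``since all hidden constants are permitted to depend on the problem data, this absorption is granted'' is false: for small $\lambda$ one has $C_0 \gg 1$, and the theorem is asserted for every $\lambda > 0$ with no smallness condition. Your argument would at best prove the result under the artificial hypothesis $\lambda > C_1 C_2$.

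The paper breaks this circularity in its Step 1 by a variational-inequality argument rather than a pointwise one. It introduces $\tilde{\usf} = \Pi(-\lambda^{-1}\bar{\psf}_{\T})$, characterized by \eqref{tildeu}, tests the continuous optimality condition with $\usf = \tilde{\usf}$ and \eqref{tildeu} with $\usf = \bar{\usf}$, and adds, obtaining $\lambda\|\bar{\usf}-\tilde{\usf}\|^2_{L^2(\Omega)} \le (\bar{\psf}-\bar{\psf}_{\T},\tilde{\usf}-\bar{\usf})$. The crucial observation is that, after inserting the auxiliary adjoint $\tilde{\psf}$, the leading term satisfies $(\bar{\psf}-\tilde{\psf},\tilde{\usf}-\bar{\usf}) = -\|\bar{\ysf}-\tilde{\ysf}\|^2_{L^2(\Omega)} \le 0$, so it can be discarded; the remaining terms are absorbed by Young's inequality against the \emph{same} coefficient $\lambda$ on the left, which works for every $\lambda>0$. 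This produces an unconditional $L^2$ bound on the control error in terms of computable quantities, after which the state, adjoint, and $L^\infty$ control errors are bounded \emph{sequentially} (state from the $L^2$ control error via $W^{1,r}$ regularity, adjoint from the state, control in $L^\infty$ from the adjoint via \eqref{Lipschitz}), with no loop to close. To repair your proof you would need to replace your pointwise control estimate with this variational step; as written, the argument has a genuine gap.
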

\begin{proof}
We proceed in five steps.

\noindent \framebox{Step 1.} First we control the error $\|  \bar{\usf} -\bar{\usf}_{\T} \|_{L^2(\Omega)}$. Define $\tilde{\usf} = \Pi (-\lambda^{-1}\bar{\psf}_{\T})$, which can be equivalently characterized by
 \begin{equation}
  \label{tildeu}
  (\bar{\psf}_{\T}+\lambda \tilde{\usf}, \usf - \tilde{\usf}) \geq 0 \quad \forall \usf \in \Uad.
 \end{equation}
We first bound $\|\bar{\usf}-\tilde{\usf}\|_{L^2(\Omega)}$. Set $\usf = \tilde \usf$ in \eqref{optimal_system_1},  $\usf = \bar \usf$ in \eqref{tildeu} and add the results to obtain
\[
  \lambda\| \bar{\usf} - \tilde{\usf} \|^2_{L^2(\Omega)} \leq (\bar{\psf} - \bar{\psf}_{\T}, \tilde{\usf} - \bar{\usf}).
\]
To bound the right hand side of the previous expression, we let $(\tilde{\ysf},\tilde{\psf}) \in H_0^1(\Omega) \times H_0^1(\Omega)$ be such that
\begin{eqnarray*}
(\nabla \tilde{\ysf}, \nabla \vsf) = (\fsf+\tilde{\usf},\vsf)\quad \forall~ \vsf \in H_{0}^{1}(\Omega),
\qquad
(\nabla \tilde{\psf}, \nabla \wsf) = (\tilde{\ysf}-\ysf_{\Omega},\wsf) \quad \forall~ \wsf \in H_{0}^{1}(\Omega).
\end{eqnarray*}
With the auxilliary adjoint state $\tilde{\psf}$ at hand, we thus arrive at
\begin{equation}
\lambda\| \bar{\usf} - \tilde{\usf} \|^2_{L^2(\Omega)}
\leq 
(\bar{\psf}  - \tilde{\psf}, \tilde{\usf} - \bar{\usf}) + 
(\tilde{\psf}  - \hat{\psf}, \tilde{\usf} - \bar{\usf}) +
(\hat{\psf} - \bar{\psf}_{\T}, \tilde{\usf} - \bar{\usf}).
\label{eq:control_L21}
\end{equation}
We now observe that
$(\nabla (\tilde{\ysf}-\bar{\ysf}),\nabla \vsf) = (\tilde{\usf}-\bar{\usf},\vsf)$ for all $\vsf \in H_{0}^{1}(\Omega)$ and 
$(\nabla \wsf,\nabla(\bar{\psf}-\tilde{\psf}))=(\bar{\ysf}-\tilde{\ysf},\wsf)$ $\forall \wsf \in H_{0}^{1}(\Omega)$. Hence,
\[
 (\bar{\psf}  - \tilde{\psf}, \tilde{\usf} - \bar{\usf}) = (\nabla(\tilde{\ysf}-\bar{\ysf}),\nabla(\bar{\psf}  - \tilde{\psf})) 
 = -\|\bar{\ysf}-\tilde{\ysf}\|_{L^{2}(\Omega)}^{2}\leq 0.
\]
In view of this, an application of the Cauchy--Schwarz and Young's inequalities to \eqref{eq:control_L21} yields
\begin{equation}\label{eq:usf-tildeusf_3}
\| \bar{\usf} - \tilde{\usf} \|^2_{L^2(\Omega)} \lesssim
\|\tilde{\psf}  - \hat{\psf}\|_{L^{2}(\Omega)}^{2} +
\|\hat{\psf} - \bar{\psf}_{\T}\|_{L^{2}(\Omega)}^{2}.
\end{equation}
We now control $\| \tilde{\psf}  - \hat{\psf}\|_{L^{2}(\Omega)}^{2}$. Since $(\nabla \wsf, \nabla(\tilde{\psf}  - \hat{\psf}))=(\tilde{\ysf}-\bar{\ysf}_{\T},\wsf)$, for all $\wsf\in H_{0}^{1}(\Omega)$, we have that
\begin{equation*}
\begin{split}
\|\tilde{\psf}  - \hat{\psf}\|_{L^{2}(\Omega)}^{2} 
\lesssim &
(\nabla(\tilde{\psf}  - \hat{\psf}),\nabla(\tilde{\psf}  - \hat{\psf}))
\\
= & (\tilde{\ysf}-\bar{\ysf}_{\T},\tilde{\psf}  - \hat{\psf})
\lesssim \left(\|\tilde{\ysf}-\hat{\ysf}\|_{L^{2}(\Omega)}+\|\hat{\ysf}-\bar{\ysf}_{\T}\|_{L^{2}(\Omega)}\right)
\|\tilde{\psf}  - \hat{\psf}\|_{L^{2}(\Omega)}.
\end{split}
\end{equation*}
Consequently,
$
\|\tilde{\psf}  - \hat{\psf}\|_{L^{2}(\Omega)} 
\lesssim \|\tilde{\ysf}-\hat{\ysf}\|_{L^{2}(\Omega)}+\|\hat{\ysf}-\bar{\ysf}_{\T}\|_{L^{2}(\Omega)}
$.
This, in conjunction with \eqref{eq:usf-tildeusf_3}, yields
\begin{equation}\label{eq:usf-tildeusf_4}
\| \bar{\usf} - \tilde{\usf} \|^2_{L^2(\Omega)} 
\lesssim
\|\tilde{\ysf}-\hat{\ysf}\|_{L^{2}(\Omega)}^2
+\|\hat{\ysf}-\bar{\ysf}_{\T}\|_{L^{2}(\Omega)}^2
+\| \hat{\psf} - \bar{\psf}_{\T}\|_{L^{2}(\Omega)}^{2}.
\end{equation}
Upon observing that $(\nabla (\tilde{\ysf}  - \hat{\ysf}),\nabla \vsf)=(\tilde{\usf}-\bar{\usf}_{\T},\vsf)$, for all $\vsf\in H_{0}^{1}(\Omega)$, we obtain that
$$
\|\tilde{\ysf}  - \hat{\ysf}\|_{L^{2}(\Omega)}^{2} 
\lesssim ( \nabla(\tilde{\ysf}  - \hat{\ysf}),\nabla(\tilde{\ysf}  - \hat{\ysf}))=(\tilde{\usf}-\bar{\usf}_{\T},\tilde{\ysf}  - \hat{\ysf})
\leq \|\tilde{\usf}-\bar{\usf}_{\T}\|_{L^{2}(\Omega)}\|\tilde{\ysf}  - \hat{\ysf}\|_{L^{2}(\Omega)}
$$
and hence, 
$
\|\tilde{\ysf}  - \hat{\ysf}\|_{L^{2}(\Omega)} 
\lesssim
\|\tilde{\usf}-\bar{\usf}_{\T}\|_{L^{2}(\Omega)}
$.
Combining this with \eqref{eq:usf-tildeusf_4} implies that
$$
\| \bar{\usf} - \tilde{\usf} \|^2_{L^2(\Omega)} 
\lesssim
\|\tilde{\usf}-\bar{\usf}_{\T}\|_{L^{2}(\Omega)}^2
+\|\hat{\ysf}-\bar{\ysf}_{\T}\|_{L^{2}(\Omega)}^2
+\| \hat{\psf} - \bar{\psf}_{\T}\|_{L^{2}(\Omega)}^{2}.
$$
The triangle inequality then allows us to conclude that
\begin{align}
\| \bar{\usf} - \bar{\usf}_{\T} \|^2_{L^2(\Omega)} 
\lesssim &
\|\tilde{\usf}-\bar{\usf}_{\T}\|_{L^{2}(\Omega)}^2
+\|\hat{\ysf}-\bar{\ysf}_{\T}\|_{L^{2}(\Omega)}^2
+\| \hat{\psf} - \bar{\psf}_{\T}\|_{L^{2}(\Omega)}^{2}\nonumber
\\
\lesssim &
\|\tilde{\usf}-\bar{\usf}_{\T}\|_{L^{\infty}(\Omega)}^2
+
\|\hat{\ysf}-\bar{\ysf}_{\T}\|_{L^{\infty}(\Omega)}^2
+
\| \hat{\psf} - \bar{\psf}_{\T}\|_{L^{\infty}(\Omega)}^{2}.\label{error_u}
\end{align}

\noindent \framebox{Step 2.} In this step we control $\|\bar{\ysf}-\bar{\ysf}_{\T}\|_{L^\infty(\Omega)}$. 
In view of the results of \cite{JK:95} there exists $r>d$ such that
\[
  \|\bar{\ysf}-\hat{\ysf}\|_{L^\infty(\Omega)}
  \lesssim 
  \|\bar{\ysf}-\hat{\ysf}\|_{W^{1,r}(\Omega)}
  \lesssim
  \|\bar{\usf}-\bar{\usf}_{\T}\|_{L^{2}(\Omega)}.
\]
Consequently, the triangle inequality and \eqref{error_u} give us that
\begin{equation}\label{error_y}
\|\bar{\ysf}-\bar{\ysf}_{\T}\|_{L^\infty(\Omega)}^{2}
\lesssim
\|\hat{\ysf}-\bar{\ysf}_{\T}\|_{L^{\infty}(\Omega)}^2
+\| \hat{\psf} - \bar{\psf}_{\T}\|_{L^{\infty}(\Omega)}^{2}
+\|\tilde{\usf}-\bar{\usf}_{\T}\|_{L^{\infty}(\Omega)}^2.
\end{equation}

\noindent \framebox{Step 3.} To bound $\|\bar{\psf}-\bar{\psf}_{\T}\|_{L^\infty(\Omega)}$ 
we again use \cite{JK:95}
to conclude that there exists $r>d$ such that
\[
  \|\bar{\psf}-\hat{\psf}\|_{L^\infty(\Omega)}
  \lesssim 
  \|\bar{\psf}-\hat{\psf}\|_{W^{1,r}(\Omega)}
  \lesssim
  \|\bar{\ysf}-\bar{\ysf}_{\T}\|_{L^{2}(\Omega)}
  \lesssim
  \|\bar{\ysf}-\bar{\ysf}_{\T}\|_{L^{\infty}(\Omega)}.
\]
Thus, this estimate and \eqref{error_y} imply that
\begin{align}
  \|\bar{\psf}-\bar{\psf}_{\T}\|_{L^\infty(\Omega)}^{2}
  \lesssim &
  \|\bar{\psf}-\hat{\psf}\|_{L^\infty(\Omega)}^{2}+\|\hat{\psf}-\bar{\psf}_{\T}\|_{L^\infty(\Omega)}^{2}\nonumber
  \\
  \lesssim &
  \|\hat{\ysf}-\bar{\ysf}_{\T}\|_{L^{\infty}(\Omega)}^2
  +\| \hat{\psf} - \bar{\psf}_{\T}\|_{L^{\infty}(\Omega)}^{2}
  +\|\tilde{\usf}-\bar{\usf}_{\T}\|_{L^{\infty}(\Omega)}^2.\label{error_p}
  \end{align}

\noindent \framebox{Step 4.} 
The goal of this step is to control the error $\|  \bar{\usf} -\bar{\usf}_{\T} \|_{L^\infty(\Omega)}$. We begin with the basic estimate
\begin{equation}
 \label{u-uh-utilde-inf}
 \|  \bar{\usf} -\bar{\usf}_{\T} \|_{L^\infty(\Omega)} \leq  \|\bar{\usf}-\tilde{\usf}\|_{L^\infty(\Omega)} +\|\tilde{\usf}-\bar{\usf}_{\T}\|_{L^\infty(\Omega)} .
\end{equation}
Using \eqref{Lipschitz} we have that
$
\|\bar{\usf}-\tilde{\usf}\|_{L^\infty(\Omega)}
=
\|\Pi (-\lambda^{-1}\bar{\psf})-\Pi (-\lambda^{-1} \bar{\psf}_{\T})\|_{L^\infty(\Omega)}
\lesssim
\|\bar{\psf}-\bar{\psf}_{\T}\|_{L^\infty(\Omega)}.
$
Therefore, upon combining this with \eqref{u-uh-utilde-inf} and \eqref{error_p}, we can conclude that
\begin{equation}\label{error_u_infty}
\|  \bar{\usf} -\bar{\usf}_{\T} \|_{L^\infty(\Omega)}^{2}
\lesssim
\|\hat{\ysf}-\bar{\ysf}_{\T}\|_{L^{\infty}(\Omega)}^2
+\| \hat{\psf} - \bar{\psf}_{\T}\|_{L^{\infty}(\Omega)}^{2}
+\|\tilde{\usf}-\bar{\usf}_{\T}\|_{L^{\infty}(\Omega)}^2.
\end{equation}

\noindent \framebox{Step 5.} 
The claimed result follows upon gathering \eqref{error_y}, \eqref{error_p} and \eqref{error_u_infty}, and using \eqref{eq:defofEu} and \eqref{final_upper_bound_state}.
\end{proof}
\section{A posteriori error analysis: efficiency}
\label{subsub:efficient}

Let $\mathcal{P}_{\T}$ denote the $L^2$-projection onto piecewise linear, over $\T$, functions. For $g \in L^2(\Omega)$ and $\mathcal{M} \subset \T$ we define 
\begin{equation}
\label{eq:osc_localf}
 \osc_{\T}(g;\mathcal{M})^2 =  \sum_{T \in \mathcal{M}} h^{4-d}_{T} \| g - \mathcal{P}_{\T}g \|_{L^{2}(T)}^2 .
\end{equation}

\begin{lemma}[local efficiency of $\E_{\ysf}$] 
In the setting of Theorem \ref{th:global_reliability} we have that
\begin{equation}
\label{eq:efficiencyy}
\E_{\ysf} (\bar{\ysf}_{\T},\bar{\usf}_{\T}; T) \lesssim \|  \bar{\ysf} - \bar{\ysf}_{\T} \|_{L^{\infty}(\Ne_T)} + h_{T}^{2} \| \bar{\usf} - \bar{\usf}_\T \|_{L^\infty(\Ne_T)} 
+ \osc_{\T}(\fsf;\Ne_T),
\end{equation}
where the hidden constant is independent of the size of the elements in the mesh $\T$ and $\#\T$. 
\label{le:local_eff_y}
\end{lemma}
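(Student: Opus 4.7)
The local indicator $\E_{\ysf}(\bar{\ysf}_{\T},\bar{\usf}_{\T};T)$ splits into an interior residual and a jump term, and I would bound each via bubble functions, with the twist that a second integration by parts is needed in each case in order to produce an $L^\infty$ error norm on the right--hand side rather than an energy--type quantity.

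For the interior residual $h_T^{2-d/2}\|\fsf+\bar{\usf}_{\T}\|_{L^{2}(T)}$, I would set $\eta_T := \mathcal{P}_{\T}\fsf + \bar{\usf}_{\T}$ (which is linear on $T$) and let $\varphi_T$ denote the element bubble. Taking the test function $v = \eta_T\varphi_T$ and using $-\Delta(\bar{\ysf}-\bar{\ysf}_{\T}) = \fsf+\bar{\usf}$ on $T$ together with $v|_{\partial T}=0$ gives
\[
\int_T \eta_T^{2}\varphi_T = \int_T (\mathcal{P}_{\T}\fsf-\fsf)v + \int_T \nabla(\bar{\ysf}-\bar{\ysf}_{\T})\cdot\nabla v - \int_T (\bar{\usf}-\bar{\usf}_{\T})v.
\]
The decisive step is to integrate by parts a second time in the middle integral, producing $-\int_T(\bar{\ysf}-\bar{\ysf}_{\T})\Delta v + \int_{\partial T}(\bar{\ysf}-\bar{\ysf}_{\T})\nabla v\cdot\nu$. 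Inverse estimates together with $\|\eta_T\|_{L^\infty(T)}\lesssim h_T^{-d/2}\|\eta_T\|_{L^{2}(T)}$ then bound $\|\Delta v\|_{L^{1}(T)}$ and the boundary integral by a constant multiple of $h_T^{d/2-2}\|\eta_T\|_{L^{2}(T)}$. Applying the bubble equivalence $\int_T \eta_T^{2}\varphi_T \gtrsim \|\eta_T\|_{L^{2}(T)}^{2}$, Cauchy--Schwarz, and the triangle inequality $\|\fsf+\bar{\usf}_{\T}\|_{L^{2}(T)}\leq \|\eta_T\|_{L^{2}(T)}+\|\fsf-\mathcal{P}_{\T}\fsf\|_{L^{2}(T)}$, followed by multiplication by $h_T^{2-d/2}$, will yield
\[
h_T^{2-d/2}\|\fsf+\bar{\usf}_{\T}\|_{L^{2}(T)} \lesssim \|\bar{\ysf}-\bar{\ysf}_{\T}\|_{L^{\infty}(T)} + h_T^{2}\|\bar{\usf}-\bar{\usf}_{\T}\|_{L^{\infty}(T)} + \osc_{\T}(\fsf;T).
\]

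For the jump term I would argue side by side. Fix an internal side $S\in\Sides_T\setminus\partial\Omega$ and note that $j_S := \llbracket \nabla\bar{\ysf}_{\T}\cdot\nu\rrbracket|_S$ is a constant. Take the standard side bubble $\varphi_S$ supported on $\Ne_S$ and test the residual equation against $v = j_S\varphi_S\in H_0^{1}(\Omega)$. Exploiting $\Delta\bar{\ysf}_{\T}|_T\equiv 0$ on each neighbor of $S$ gives
\[
j_S^{2}\int_S \varphi_S = \int_{\Ne_S}(\fsf+\bar{\usf})v - \int_{\Ne_S}\nabla(\bar{\ysf}-\bar{\ysf}_{\T})\cdot\nabla v.
\]
I would again integrate by parts a second time, element by element, picking up both a volume term $\int(\bar{\ysf}-\bar{\ysf}_{\T})\Delta v$ and a jump $\int_S (\bar{\ysf}-\bar{\ysf}_{\T})\llbracket\nabla v\cdot\nu\rrbracket$. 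Since $\|\Delta v\|_{L^{\infty}(T)}\lesssim h_T^{-2}|j_S|$ and $\|\llbracket\nabla v\cdot\nu\rrbracket\|_{L^{\infty}(S)}\lesssim h_T^{-1}|j_S|$, both contributions are controlled by $h_T^{d-2}|j_S|\,\|\bar{\ysf}-\bar{\ysf}_{\T}\|_{L^{\infty}(\Ne_S)}$. Cauchy--Schwarz on the source integral, together with $\|\bar{\usf}-\bar{\usf}_{\T}\|_{L^{2}(\Ne_S)}\lesssim h_T^{d/2}\|\bar{\usf}-\bar{\usf}_{\T}\|_{L^{\infty}(\Ne_S)}$ and $\int_S\varphi_S\gtrsim h_S^{d-1}$, then gives, after dividing by $|j_S|$ and multiplying by $h_T$,
\[
h_T|j_S| \lesssim \|\bar{\ysf}-\bar{\ysf}_{\T}\|_{L^{\infty}(\Ne_S)} + h_T^{2-d/2}\|\fsf+\bar{\usf}_{\T}\|_{L^{2}(\Ne_S)} + h_T^{2}\|\bar{\usf}-\bar{\usf}_{\T}\|_{L^{\infty}(\Ne_S)}.
\]
Substituting the interior residual bound from the previous step (applied on each neighbor of $T$) and taking the maximum over $S\in\Sides_T\setminus\partial\Omega$ delivers the jump contribution in the required form.

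Combining both bounds and using $T\subset\Ne_T$ yields \eqref{eq:efficiencyy}. The delicate point, in my view, is the second integration by parts in each step: it is what replaces the natural energy--type bound by one involving $\bar{\ysf}-\bar{\ysf}_{\T}$ and $\bar{\usf}-\bar{\usf}_{\T}$ in $L^\infty$, and it forces one to carefully track all the boundary and jump contributions produced by the fact that $v$ is only piecewise polynomial rather than smooth across interelement sides.
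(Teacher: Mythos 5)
Your proposal is correct and follows essentially the same route as the paper: the residual identity obtained by integrating by parts twice, an element bubble applied to $\mathcal{P}_{\T}\fsf+\bar{\usf}_{\T}$ with the oscillation split, and a side bubble for the constant jump, with the $L^\infty$ norms extracted exactly as you describe. The only cosmetic difference is that the paper tests with $(\mathcal{P}_{\T}\fsf+\bar{\usf}_{\T})\varphi_T^2$ so that the interelement boundary term vanishes identically, whereas you keep $\varphi_T$ and estimate that boundary term directly, which works and gives the same scaling.
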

\begin{proof}
Let $\vsf \in H^1_0(\Omega)$ be such that $\vsf_{|T} \in C^2(T)$ for all $T \in \T$. Using \eqref{optimal_system_1} and integrating by parts yields
\begin{equation*}
\label{eq:res1y}
\int_{\Omega} \nabla (\bar{\ysf} - \bar{\ysf}_{\T}) \cdot \nabla \vsf  = \sum_{T \in \T} \int_{T} \left(\fsf + \bar{\usf}\right) \vsf  + \sum_{S \in \Sides } \int_{S } \llbracket \nabla \bar{\ysf}_{\T} \cdot \nu \rrbracket  \vsf.
\end{equation*}
Since on each $T \in \T$ we have that $\vsf \in C^2(T)$, we again apply integration by parts to conclude that
\begin{equation*}
\label{eq:res2y}
\int_{\Omega} \nabla (\bar{\ysf} - \bar{\ysf}_{\T}) \cdot \nabla \vsf  = - \sum_{T \in \T} \int_{T} \Delta \vsf (\bar{\ysf} - \bar{\ysf}_{\T})   - \sum_{S \in \Sides } \int_{S} \llbracket \nabla \vsf \cdot \nu \rrbracket (\bar{\ysf} - \bar{\ysf}_{\T}).
\end{equation*}
In conclusion, since the left hand sides of the previous expressions coincide, we arrive at the identity
\begin{align}
\sum_{T \in \T} \int_{T} \left(\fsf + \bar{\usf}\right) \vsf  + \sum_{S \in \Sides } \int_{S } \llbracket \nabla \bar{\ysf}_{\T} \cdot \nu \rrbracket  \vsf\nonumber
=
&- \sum_{T \in \T} \int_{T} \Delta \vsf (\bar{\ysf} - \bar{\ysf}_{\T})   
\\
&- \sum_{S \in \Sides } \int_{S} \llbracket \nabla \vsf \cdot \nu \rrbracket (\bar{\ysf} - \bar{\ysf}_{\T}),\label{eq:error_adjointy}
\end{align}
for every $\vsf \in H^1_0(\Omega)$ such that $\vsf_{|T} \in C^2(T)$ for all $T \in \T$. We now proceed, on the basis of \eqref{eq:defofEy}, in two steps.

\noindent \framebox{Step 1.} Let $T\in\T$. We begin with the basic estimate  
\begin{equation}
\label{eq:aux_EOy}
h_{T}^{2-d/2}
\|\fsf + \bar{\usf}_{\T}\|_{L^{2}(T)}
\leq 
h_{T}^{2-d/2}
\|\mathcal{P}_{\T}\fsf + \bar{\usf}_{\T}\|_{L^{2}(T)} +
 h_{T}^{2-d/2}
\|\fsf-\mathcal{P}_{\T}\fsf\|_{L^{2}(T)}.
\end{equation}
By letting $\vsf=\beta_{T}=\left( \mathcal{P}_{\T}\fsf + \bar{\usf}_{\T}  \right)\varphi_{T}^2$ in \eqref{eq:error_adjointy}, where $\varphi_{T}$ is the standard bubble function over $T$ \cite{AObook,MR3059294}, we obtain that
\begin{equation}
\int_{T}  \left( \mathcal{P}_{\T}\fsf + \bar{\usf}_{\T}  \right) \beta_{T} 
 = 
-\int_{T} \Delta \beta_{T} (\bar{\ysf} - \bar{\ysf}_{\T}) 
- \int_{T}(\bar{\usf}-\bar{\usf}_{\T})\beta_{T} 
- \int_{T}\left( \fsf - \mathcal{P}_{\T}\fsf  \right) \beta_{T},
\label{eq:eff_auxy}
\end{equation}
since
$
\int_{S } \llbracket \nabla \beta_{T} \cdot \nu \rrbracket (\bar \ysf - \bar \ysf_{\T}) = 0
$
for all $S\in\Sides$.
We now bound each term on the right--hand side of \eqref{eq:eff_auxy} separately. Since $\Delta(\mathcal{P}_{\T}\fsf + \bar{\usf}_{\T})=0$ on $T$, we have that
$
\Delta \beta_{T} 
=4\nabla(\mathcal{P}_{\T}\fsf + \bar{\usf}_{\T})\cdot\nabla \varphi_T \varphi_T
+
2 (\mathcal{P}_{\T}\fsf + \bar{\usf}_{\T})(\varphi_T\Delta\varphi_T + \nabla\varphi_T\cdot\nabla \varphi_T).
$
This equality, the properties of the bubble function $\varphi_T$ and an inverse inequality allow us to conclude that
\begin{align*}
& \left|\int_{T} \Delta \beta_{T} (\bar{\ysf} - \bar{\ysf}_{\T})  \right| 
\\
\lesssim &
\left(
h_{T}^{d/2-1}\|\nabla(\mathcal{P}_{\T}\fsf + \bar{\usf}_{\T})\|_{L^{2}(T)} + 
h_{T}^{d/2-2}\|\mathcal{P}_{\T}\fsf + \bar{\usf}_{\T}\|_{L^{2}(T)}
\right)\|\bar{\ysf}-\bar{\ysf}_{\T}\|_{L^{\infty}(T)}
\\
\lesssim & h_{T}^{d/2-2}
\|\mathcal{P}_{\T}\fsf + \bar{\usf}_{\T}\|_{L^{2}(T)} 
\|\bar{\ysf}-\bar{\ysf}_{\T}\|_{L^{\infty}(T)}.
\end{align*}
In addition, we have that
\begin{align*}
\left| \int_{T}(\bar{\usf}-\bar{\usf}_{\T})\beta_{T}\right| 
\lesssim &
\|\bar{\usf}-\bar{\usf}_{\T}\|_{L^{2}(T)}\|\mathcal{P}_{\T}\fsf + \bar{\usf}_{\T}\|_{L^{2}(T)}
\\
\lesssim &
h_{T}^{d/2}\|\bar{\usf}-\bar{\usf}_{\T}\|_{L^{\infty}(T)}\|\mathcal{P}_{\T}\fsf + \bar{\usf}_{\T}\|_{L^{2}(T)}
\end{align*}
and
\begin{align*}
\left|\int_{T}\left( \fsf - \mathcal{P}_{\T}\fsf   \right) \beta_{T}\right|
\lesssim
\|\fsf - \mathcal{P}_{\T}\fsf\|_{L^{2}(T)}
\|\mathcal{P}_{\T}\fsf + \bar{\usf}_{\T}\|_{L^{2}(T)}.
\end{align*}
In view of the fact that
$
\|\mathcal{P}_{\T}\fsf + \bar{\usf}_{\T}\|_{L^{2}(T)}^2  
\lesssim
\int_{T}  \left( \mathcal{P}_{\T}\fsf + \bar{\usf}_{\T}  \right) \beta_{T},
$
the previous findings allow us to state that
\begin{align*}
h_{T}^{2-d/2}
\| \mathcal{P}_{\T}\fsf + \bar{\usf}_{\T}  \|_{L^{2}(T)}
\lesssim 
\|\bar{\ysf}-\bar{\ysf}_{\T}\|_{L^{\infty}(T)} 
+ h_{T}^{2}\|\bar{\usf}-\bar{\usf}_{\T}\|_{L^{\infty}(T)}
+
h_{T}^{2-d/2}\|\fsf - \mathcal{P}_{\T}\fsf\|_{L^{2}(T)}.
\end{align*}
Consequently, using \eqref{eq:aux_EOy} we conclude that
\begin{equation}\label{yeffstep1}
h_{T}^{2-d/2} \|\mathcal{P}_{\T}\fsf + \bar{\usf}_{\T} \|_{L^{2}(T)}
\lesssim
\|\bar{\ysf}-\bar{\ysf}_{\T}\|_{L^{\infty}(T)} +
 h_{T}^{2}
\|\bar{\usf}-\bar{\usf}_{\T}\|_{L^{\infty}(T)}
+
\osc_{\T}(\fsf;T).
\end{equation}

\noindent \framebox{Step 2.} Let $T \in \T$ and $S \in \Sides_T$. We proceed to control $h_T \| \llbracket   \nabla \bar{\ysf}_{\T} \cdot \nu \rrbracket  \|_{L^{\infty}(S)}$ in \eqref{eq:defofEy}. To do this, we use the property 
\begin{equation}
\label{eq:paso_vaca}
|S|\| \llbracket \nabla \bar{\ysf}_{\T} \cdot \nu \rrbracket \|_{L^{\infty}(S)} \lesssim \left|\int_S \llbracket \nabla \bar{\ysf}_{\T} \cdot \nu \rrbracket \varphi_S \right|,
\end{equation}
of $\varphi_S$, the standard bubble function over $S$ \cite{AObook,MR3059294}. We now
let $\vsf=\varphi_S$ in \eqref{eq:error_adjointy} and arrive at
\begin{align*}
  &\left| \int_S \llbracket  \nabla \bar{\ysf}_\T \cdot \nu \rrbracket \varphi_S \right|
  \\
   \leq & \sum_{T' \in \Ne_S} \int_{T'} |\fsf + \bar\usf| \varphi_S +
  \sum_{T' \in \Ne_S} \int_{T'} |\bar{\ysf}-\bar{\ysf}_\T| |\LAP \varphi_S|
 + \sum_{T' \in \Ne_S}\sum_{S' \in \Sides_{T'}}
   \int_{S'} |\bar{\ysf}-\bar{\ysf}_\T| |\llbracket \nabla \varphi_S \cdot \nu \rrbracket|
   \\
   \lesssim & \sum_{T' \in \Ne_S}|T'|^{1/2}\left( \| \fsf + \bar \usf_{\T}\|_{L^2(T')} + |T'|^{1/2} \|\bar \usf - \bar \usf_{\T} \|_{L^\infty(T')}\right)
\\  
 & + \sum_{T' \in \Ne_S}\left(h_S^{-2} |T'| + h_S^{-1} \sum_{S' \in \Sides_{T'}}|S'|\right) \| \bar{\ysf}-\bar{\ysf}_\T \|_{L^\infty(T')}.
\end{align*}
In view of the fact that $h_T |S|^{-1} \approx h_T^{2-d}$, the previous estimate combined with \eqref{eq:paso_vaca} and \eqref{yeffstep1} yields the bound
\begin{align*}
 h_T \| \llbracket  \nabla \bar{\ysf}_{\T} \cdot \nu \rrbracket  \|_{L^{\infty}(S)} &\lesssim h_T^{2} \|\bar \usf - \bar \usf_{\T} \|_{L^{\infty}(\Ne_S)} +  \| \bar \ysf -  \bar \ysf_{\T} \|_{L^{\infty}(\Ne_S)} + \osc_{\T}( \fsf;\Ne_S).
\end{align*}

We finally combine the results of Step 1 and 2 and arrive at the desired estimate \eqref{eq:efficiencyy}. This concludes the proof.
\end{proof}

Similar arguments to the ones elaborated in the proof of Lemma \ref{le:local_eff_y} allow us to conclude the following result.

\begin{lemma}[local efficiency of $\E_{\psf}$] 
In the setting of Theorem \ref{th:global_reliability} we have that
\begin{equation}
\label{eq:efficiencyp}
\E_{\psf} (\bar{\psf}_{\T},\bar{\ysf}_{\T}; T) \lesssim \|  \bar{\psf} - \bar{\psf}_{\T} \|_{L^{\infty}(\Ne_T)} + h_{T}^{2} \| \bar{\ysf} - \bar{\ysf}_\T \|_{L^\infty(\Ne_T)} 
+ \osc_{\T}(\ysf_\Omega;\Ne_T),
\end{equation}
where the hidden constant is independent of 
the size of the elements in the mesh $\T$ and $\# \T$.
\label{le:local_eff_p}
\end{lemma}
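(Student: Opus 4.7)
The plan is to mirror the proof of Lemma \ref{le:local_eff_y} under the correspondence $\bar{\ysf} \leftrightarrow \bar{\psf}$, $\bar{\ysf}_{\T} \leftrightarrow \bar{\psf}_{\T}$, $\fsf + \bar{\usf} \leftrightarrow \bar{\ysf} - \ysf_\Omega$, and $\bar{\usf} - \bar{\usf}_{\T} \leftrightarrow \bar{\ysf} - \bar{\ysf}_{\T}$. The oscillation term will correspondingly involve $\ysf_\Omega$ rather than $\fsf$, which is why $\mathcal{P}_{\T}$ appears only on $\ysf_\Omega$.

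First, I would derive the analogue of the residual identity \eqref{eq:error_adjointy}. Starting from the second line of \eqref{optimal_system_1} and integrating by parts elementwise, for every $\wsf\in H^1_0(\Omega)$ with $\wsf_{|T}\in C^2(T)$ I obtain
\begin{equation*}
\sum_{T \in \T}\!\int_{T}\!(\bar{\ysf}-\ysf_\Omega)\wsf + \sum_{S \in \Sides}\!\int_{S}\!\llbracket\nabla\bar{\psf}_{\T}\cdot\nu\rrbracket\wsf
= -\sum_{T \in \T}\!\int_{T}\!\Delta\wsf\,(\bar{\psf}-\bar{\psf}_{\T}) - \sum_{S \in \Sides}\!\int_{S}\!\llbracket\nabla\wsf\cdot\nu\rrbracket(\bar{\psf}-\bar{\psf}_{\T}),
\end{equation*}
using that $\Delta\bar{\psf}_{\T}=0$ elementwise. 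The rest of the proof consists of two steps parallel to those of Lemma \ref{le:local_eff_y}.

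In Step 1, I would split $h_T^{2-d/2}\|\bar{\ysf}_{\T}-\ysf_\Omega\|_{L^2(T)}$ via the triangle inequality into a piecewise linear part $h_T^{2-d/2}\|\bar{\ysf}_{\T}-\mathcal{P}_{\T}\ysf_\Omega\|_{L^2(T)}$ and the oscillation $h_T^{2-d/2}\|\ysf_\Omega-\mathcal{P}_{\T}\ysf_\Omega\|_{L^2(T)}$. For the piecewise linear part, I take $\wsf = \beta_T = (\bar{\ysf}_{\T} - \mathcal{P}_{\T}\ysf_\Omega)\varphi_T^2$ in the residual identity. Since $\beta_T$ vanishes on $\partial T$, the jump terms drop out, and writing $\bar{\ysf}-\ysf_\Omega = (\bar{\ysf}_{\T}-\mathcal{P}_{\T}\ysf_\Omega) + (\bar{\ysf}-\bar{\ysf}_{\T}) + (\mathcal{P}_{\T}\ysf_\Omega - \ysf_\Omega)$ produces, after exploiting $\Delta(\bar{\ysf}_{\T}-\mathcal{P}_{\T}\ysf_\Omega)=0$ on $T$, exactly the analogue of \eqref{eq:eff_auxy}. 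Bounding $|\int_T \Delta\beta_T(\bar{\psf}-\bar{\psf}_{\T})|$ by an inverse inequality, using H\"older on the remaining two terms, and invoking the bubble equivalence $\|\bar{\ysf}_{\T}-\mathcal{P}_{\T}\ysf_\Omega\|^2_{L^2(T)}\lesssim \int_T (\bar{\ysf}_{\T}-\mathcal{P}_{\T}\ysf_\Omega)\beta_T$ delivers
\begin{equation*}
h_T^{2-d/2}\|\bar{\ysf}_{\T}-\ysf_\Omega\|_{L^2(T)} \lesssim \|\bar{\psf}-\bar{\psf}_{\T}\|_{L^\infty(T)} + h_T^2\|\bar{\ysf}-\bar{\ysf}_{\T}\|_{L^\infty(T)} + \osc_{\T}(\ysf_\Omega;T).
\end{equation*}

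In Step 2, I would control $h_T\|\llbracket\nabla\bar{\psf}_{\T}\cdot\nu\rrbracket\|_{L^\infty(S)}$ by testing the residual identity with $\wsf = \varphi_S$, the standard edge bubble, and using the discrete equivalence $|S|\|\llbracket\nabla\bar{\psf}_{\T}\cdot\nu\rrbracket\|_{L^\infty(S)}\lesssim |\int_S \llbracket\nabla\bar{\psf}_{\T}\cdot\nu\rrbracket\varphi_S|$. Bounding the volume term on $\Ne_S$ via Step 1, and the terms $\int_{T'}\Delta\varphi_S(\bar{\psf}-\bar{\psf}_{\T})$ and $\int_{S'}\llbracket\nabla\varphi_S\cdot\nu\rrbracket(\bar{\psf}-\bar{\psf}_{\T})$ through scaling $h_S^{-2}|T'|$ and $h_S^{-1}|S'|$ against $\|\bar{\psf}-\bar{\psf}_{\T}\|_{L^\infty(T')}$, and using $h_T|S|^{-1}\approx h_T^{2-d}$, yields the same structure as in the primal case. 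Combining Steps 1 and 2 produces \eqref{eq:efficiencyp}. No step should be a genuine obstacle; the only care needed is tracking which of $\bar{\ysf}-\bar{\ysf}_{\T}$ and $\bar{\psf}-\bar{\psf}_{\T}$ appears in each place, as the roles of state and adjoint are swapped relative to the previous lemma.
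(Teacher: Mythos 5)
Your proposal is correct and is exactly the argument the paper intends: the paper gives no separate proof of Lemma \ref{le:local_eff_p}, stating only that it follows by ``similar arguments'' to Lemma \ref{le:local_eff_y}, and your substitutions ($\fsf+\bar{\usf}\leftrightarrow\bar{\ysf}-\ysf_\Omega$, oscillation on $\ysf_\Omega$, interior and edge bubbles, inverse inequalities) reproduce that proof faithfully. The only nitpick is that the interelement term $\int_{S}\llbracket\nabla\beta_T\cdot\nu\rrbracket(\bar{\psf}-\bar{\psf}_{\T})$ vanishes because $\nabla\beta_T$ (not merely $\beta_T$) is zero on $\partial T$, which is guaranteed by the factor $\varphi_T^2$ you correctly use.
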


\begin{lemma}[local efficiency of $\E_{\usf}$] 
In the setting of Theorem \ref{th:global_reliability} we have that
\begin{equation}
\label{eq:efficiencyu}
\E_{\usf} (\bar{\usf}_{\T},\bar{\psf}_{\T}; T) \lesssim \left\| \bar{\usf}- \bar{\usf}_{\T}  \right\|_{L^\infty(T)} + \left\| \bar{\psf}- \bar{\psf}_{\T}  \right\|_{L^\infty(T)},
\end{equation}
where the hidden constant is independent of 
the size of the elements in the mesh $\T$ and $\# \T$.
\label{le:local_eff_u}
\end{lemma}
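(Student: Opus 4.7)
The plan is to use the triangle inequality together with the Lipschitz (nonexpansivity) property of the projection operator $\Pi$ stated in \eqref{Lipschitz}.

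First I would insert the exact optimality relation $\bar{\usf}=\Pi(-\lambda^{-1}\bar{\psf})$ from \eqref{eq:u} inside the estimator. Concretely, add and subtract $\bar{\usf}=\Pi(-\lambda^{-1}\bar{\psf})$ in the definition of $\E_{\usf}$ and apply the triangle inequality:
\begin{equation*}
\E_{\usf}(\bar{\usf}_{\T},\bar{\psf}_{\T};T)
=\|\Pi(-\lambda^{-1}\bar{\psf}_{\T})-\bar{\usf}_{\T}\|_{L^{\infty}(T)}
\le \|\Pi(-\lambda^{-1}\bar{\psf}_{\T})-\Pi(-\lambda^{-1}\bar{\psf})\|_{L^{\infty}(T)}
+\|\bar{\usf}-\bar{\usf}_{\T}\|_{L^{\infty}(T)}.
\end{equation*}

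Next, I would invoke the nonexpansivity of $\Pi$ on $\mathcal{G}=T$ (cf.\ \eqref{Lipschitz}) to control the first term by $\lambda^{-1}\|\bar{\psf}-\bar{\psf}_{\T}\|_{L^{\infty}(T)}$. Absorbing $\lambda^{-1}$ into the hidden constant immediately yields \eqref{eq:efficiencyu}. No bubble functions, inverse inequalities, or oscillation terms are required; this is why the bound is purely local (over $T$, not $\Ne_T$) and contains no oscillation contribution.

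There is essentially no obstacle here: the only subtlety is to make sure that \eqref{Lipschitz} applies pointwise on an arbitrary measurable subset $\mathcal{G}\subset\Omega$ (in particular $\mathcal{G}=T$), which follows directly from the pointwise definition of $\Pi$ in \eqref{eq:u}. The constant in \eqref{eq:efficiencyu} depends on $\lambda^{-1}$ but is independent of the mesh, as required.
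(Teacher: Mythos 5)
Your argument is correct and is exactly the one the paper intends: the paper's proof is the single line ``follows immediately from definition \eqref{eq:u} and the Lipschitz property \eqref{Lipschitz},'' and your triangle-inequality decomposition with the nonexpansivity of $\Pi$ on $\mathcal{G}=T$ is precisely that argument spelled out. The observation that the hidden constant absorbs $\lambda^{-1}$ (more precisely, $\max\{1,\lambda^{-1}\}$) and is mesh-independent is also accurate.
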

\begin{proof}
The estimate follows immediately from definition \eqref{eq:u} and the Lipschitz property \eqref{Lipschitz}.
\end{proof}

The results of Lemmas \ref{le:local_eff_y}, \ref{le:local_eff_p} and \ref{le:local_eff_u} immediately yield the following result upon observing that $\Omega$ is bounded.

\begin{theorem}[local and global efficiency of $\E$] 
In the setting of Theorem \ref{th:global_reliability} we have that
\begin{align}
\label{eq:efficiency_ocp}
\E (\bar{\ysf}_{\T},\bar{\psf}_{\T},\bar{\usf}_{\T}; T) \lesssim &
\|  \bar{\ysf} - \bar{\ysf}_{\T} \|_{L^{\infty}(\Ne_T)}
+\|  \bar{\psf} - \bar{\psf}_{\T} \|_{L^{\infty}(\Ne_T)}
+\left\| \bar{\usf}- \bar{\usf}_{\T}  \right\|_{L^\infty(\Ne_T)}\nonumber
\\
&+ \osc_{\T}(\fsf;\Ne_T)
+ \osc_{\T}(\ysf_\Omega;\Ne_T),
\end{align}
and
\begin{align}
\E (\bar{\ysf}_{\T},\bar{\psf}_{\T},\bar{\usf}_{\T}; \T) &\lesssim
\|(e_{\bar{\ysf}},e_{\bar{\psf}},e_{\bar{\usf}})\|_{\Omega}
+ \max_{T\in\T} \osc_{\T}(\fsf;\Ne_T)
+ \max_{T\in\T}\osc_{\T}(\ysf_\Omega;\Ne_T),
\end{align}
where the hidden constants are independent of the size of the elements in the mesh $\T$ and $\# \T$.
\label{th:localeff_ocp}
\end{theorem}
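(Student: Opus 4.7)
The plan is to obtain both estimates as a direct consequence of the three local efficiency lemmas already proved for $\E_{\ysf}$, $\E_{\psf}$, and $\E_{\usf}$, together with the definition \eqref{eq:defofEypu}. No new analytic machinery is required; the argument is essentially assembly of previously established facts.

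For the local bound \eqref{eq:efficiency_ocp}, I would fix $T \in \T$ and start from $\E(\bar{\ysf}_\T,\bar{\psf}_\T,\bar{\usf}_\T;T) = (\E_{\ysf}^2(T) + \E_{\psf}^2(T) + \E_{\usf}^2(T))^{1/2}$. Using the elementary inequality $(a^2+b^2+c^2)^{1/2} \leq |a|+|b|+|c|$, I reduce the task to bounding each of the three local indicators separately. Inserting Lemmas \ref{le:local_eff_y}, \ref{le:local_eff_p}, and \ref{le:local_eff_u} and collecting like terms produces the right-hand side of \eqref{eq:efficiency_ocp}. A small amount of bookkeeping is needed: the prefactors $h_T^2$ appearing in the bounds of $\E_\ysf$ and $\E_\psf$ are harmlessly absorbed into the hidden constant via $h_T \leq \diam(\Omega)$ (since $\Omega$ is bounded), and the inclusion $T \subset \Ne_T$ is used to unify the patches over which the $L^\infty$ norms are taken.

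For the global bound, I would take the maximum over $T \in \T$ of the local estimate. The key observation is that $\max_{T\in\T}\|\cdot\|_{L^\infty(\Ne_T)} \leq \|\cdot\|_{L^\infty(\Omega)}$, so that the three error contributions collapse into the quantity $\|(e_{\bar{\ysf}},e_{\bar{\psf}},e_{\bar{\usf}})\|_{\Omega}$, while the two oscillation contributions remain as additive data-oscillation terms $\max_{T\in\T}\osc_\T(\fsf;\Ne_T)$ and $\max_{T\in\T}\osc_\T(\ysf_\Omega;\Ne_T)$. A final elementary estimate, $\E_\ysf^2(\T)+\E_\psf^2(\T)+\E_\usf^2(\T) \leq 3\max_{T\in\T}\E^2(T)$, connects the global estimator $\E(\T)$ to the pointwise maxima, yielding the second displayed inequality.

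There is no genuine obstacle in this proof: all the heavy lifting was done in the proofs of the three lemmas. The only delicate point is keeping the oscillation terms explicit rather than trying to absorb them into the error, which is the standard convention in a posteriori analysis and is needed to preserve the efficiency constant's independence of the mesh.
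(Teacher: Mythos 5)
Your proposal is correct and follows essentially the same route as the paper, whose proof is simply the observation that Lemmas \ref{le:local_eff_y}, \ref{le:local_eff_p} and \ref{le:local_eff_u} combine directly, with the boundedness of $\Omega$ used to absorb the $h_T^2$ factors. The bookkeeping you spell out (the inequality $(a^2+b^2+c^2)^{1/2}\leq |a|+|b|+|c|$, the inclusion $T\subset\Ne_T$, and passing from $\max_{T\in\T}\|\cdot\|_{L^\infty(\Ne_T)}$ to $\|\cdot\|_{L^\infty(\Omega)}$) is exactly what the paper leaves implicit.
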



\section{Numerical example}

We illustrate the performance of the a posteriori error estimator with a numerical example. We set $\Omega=(-1,1)^{2}\setminus[0,1)\times(-1,0]$, $\asf=0$, $\bbsf=1$ and the data $\fsf$ and $\ysf_\Omega$ to be such that, in polar coordinates $(r,\theta)$ with $\theta\in[0,3\pi/2]$,
\[
\bar{\ysf}=(1-r^{2}\cos^{2}(\theta))(1-r^{2}\sin^{2}(\theta))r^{2/3}\sin(2\theta/3)
\]
and
\[
\bar{\psf}=\sin(2\pi r\cos(\theta))\sin(2\pi r\sin(\theta))r^{2/3}\sin(2\theta/3).
\]
A sequence of adaptively refined meshes was generated from an initial mesh (consisting of 12 congruent triangles) by using a maximum strategy to mark elements for refinement. The number of degrees of freedom Ndof is three times the number of vertices in the mesh. Figure \ref{Fig:1} shows the results and we can observe that, once the mesh has been sufficiently refined, the error and estimator converge at the optimal rate.

\begin{figure}[!htbp]
  \begin{center}
  \psfrag{total Linfinity error}{\huge $\|(e_{\bar{\ysf}},e_{\bar{\psf}},e_{\bar{\usf}})\|_{\Omega}$}
  \psfrag{total Linfinity estimator}{\huge $\E(\bar{\ysf}_{\T},\bar{\psf}_{\T},\bar{\usf}_{\T}; \T)$}
  \psfrag{Optimal rate}{\huge Ndof$^{-1}$}
  \psfrag{Ndof}{\huge Ndof}
    \scalebox{0.42}{\includegraphics{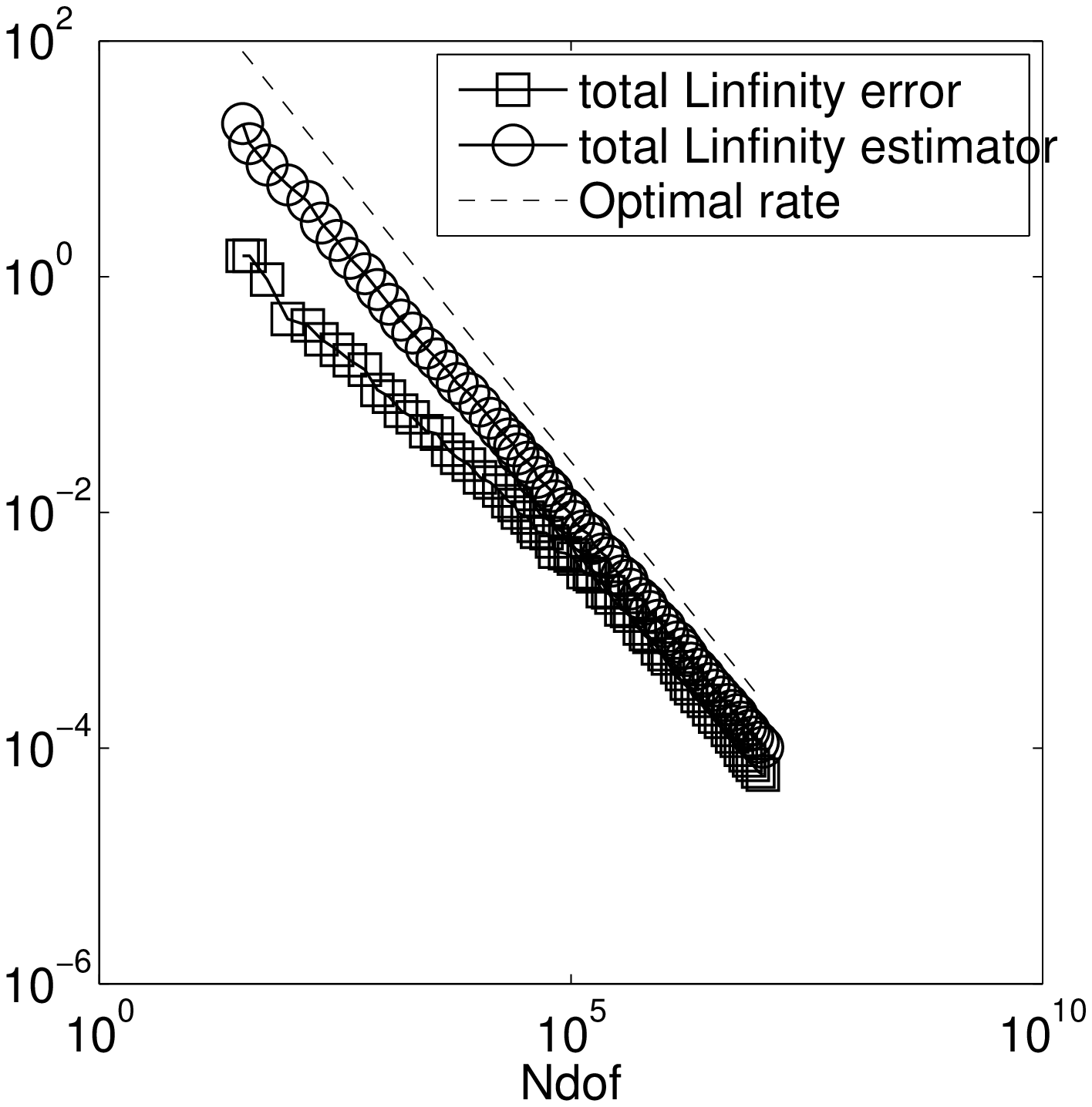}}
    \scalebox{0.48}{\includegraphics{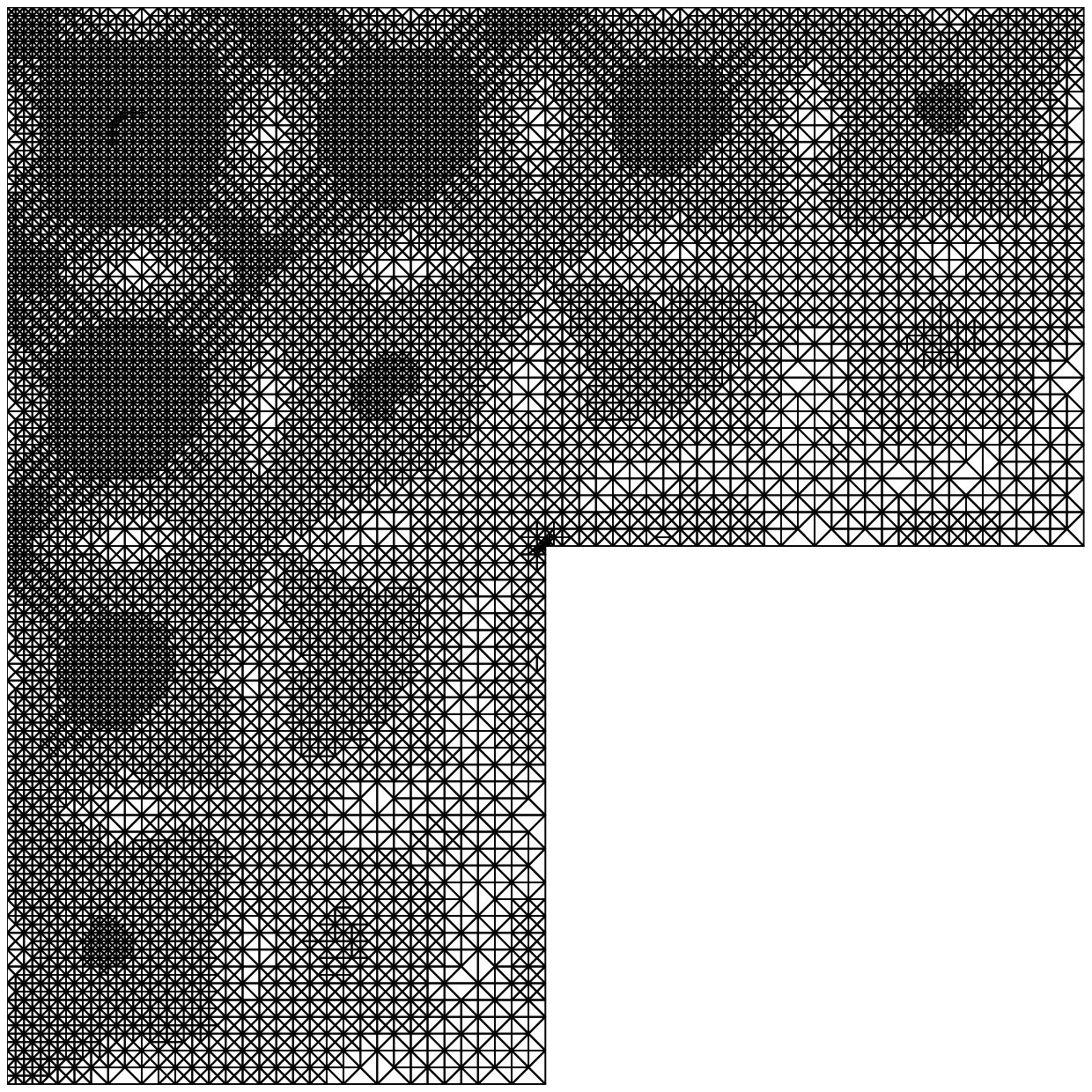}}
  \end{center}
\caption{The error $\|(e_{\bar{\ysf}},e_{\bar{\psf}},e_{\bar{\usf}})\|_{\Omega}$ and the estimator $\E(\bar{\ysf}_{\T},\bar{\psf}_{\T},\bar{\usf}_{\T}; \T)$ (left) and the 24th adaptively refined mesh (right).}
\label{Fig:1}
\end{figure}

\begin{acknowledgements}
A. Allendes was supported in part by CONICYT through FONDECYT project 1170579. E. Ot\'arola was supported in part by CONICYT through FONDECYT project 3160201. A. J. Salgado was supported in part by NSF grant DMS-1418784.  
\end{acknowledgements}

\bibliographystyle{spmpsci} 
\bibliography{bibliography}

\end{document}